\newtheorem{theorem}{Theorem}[section]
\newtheorem{proposition}[theorem]{Proposition}
\newtheorem{lemma}[theorem]{Lemma}
\newtheorem{corollary}[theorem]{Corollary}
\newenvironment{proof}{{\noindent \sc Proof.}}{\hfill $\Qed$\\}
\newcommand{\Qed}{\rule{2.5mm}{3mm}}
\newcommand{\G}{\Gamma}
\newcommand{\QQ}{\mathbb{Q}}
\newcommand{\RR}{\mathbb{R}}
\newcommand{\CC}{\mathbb{C}}
\DeclareMathOperator{\mat}{Mat}
\newcommand{\MX}{\mat_X(\CC)}
\newcounter{case}
\renewcommand{\thecase}{\arabic{case}}
\newcounter{subcase}
\numberwithin{subcase}{case}
\title{A CLASSIFICATION OF $Q$-POLYNOMIAL DISTANCE-REGULAR GRAPHS WITH GIRTH $6$}
\author{\v{S}tefko Miklavi\v{c} 
	\thanks{This work is supported in part by the Slovenian Research and Innovation Agency (research program P1-0285 and research projects J1-3001, J1-3003, J1-4008, J1-4084, N1-0208, N1-0353, J1-50000, J1-60012).
	} \\ [3mm]
	Andrej Maru\v si\v c Institute,  University of Primorska \\
	Muzejski trg 2, 6000 Koper, Slovenia, and \\[3mm]
	Institute of Mathematics, Physics and Mechanics \\
	Jadranska 19, 1000 Ljubljana, Slovenia \\[3mm]
	stefko.miklavic@upr.si}
\begin{document}
 \maketitle
 
 \begin{abstract}
Let $\G$ denote a $Q$-polynomial distance-regular graph with diameter $D$ and valency $k \ge 3$. In [Homotopy in $Q$-polynomial distance-regular graphs, Discrete Math., {\bf 223} (2000), 189–206], H. Lewis  showed that the girth of $\G$ is at most $6$. In this paper we classify graphs that attain this upper bound. We show that $\G$ has girth $6$ if and only if it is either isomorphic to the Odd graph on a set of cardinality $2D +1$, or to a generalized hexagon of order $(1, k -1)$. 
 \end{abstract}

\section{Introduction} \label{sec:intro}

Let $\G$ denote a $Q$-polynomial distance-regular graph with diameter $D$, valency $k\ge 3$,
and intersection numbers $a_i, b_i, c_i$ (see Sections 2 and 3 for formal definitions). In \cite[Corollary 30]{Le}, H. Lewis showed that the girth of $\G$ is at most $6$. Therefore, the problem of classification of $Q$-polynomial distance-regular graphs that attain this upper bound arises naturally. In the present paper we show that $\G$ has girth $6$ if and only if it is either isomorphic to the Odd graph on a set of cardinality $2D + 1$, or to a generalized hexagon of order $(1, k -1)$. Our strategy for proving the above result is as follows. If $\G$ has girth $6$, then $a_1 = a_2 = 0$ and $c_2 = 1$. It follows from \cite[Theorem 6.3]{Mi1} that in this case $\G$ is either bipartite or almost bipartite. We treat these two cases separately. We now briefly summarize what was done so far for each of the above two cases.

The classification of almost bipartite $Q$-polynomial distance-regular graphs with $D \ge 4$ is given in \cite[Theorem 1.1]{LT}. It transpires that if $\G$ is almost bipartite with $D \ge 4$ and girth $6$, then it is the Odd graph on a set of cardinality $2D+1$. Therefore, it remains to consider the case $D=3$. We do this in Section \ref{sec:almostbip}.

Assume next that $\G$ is bipartite with diameter $D \ge 3$ and valency $k \ge 3$.  In \cite[Theorem 1.1]{Ca1}, Caughman showed that if $D \ge 12$, then $\G$ is the $D$-dimensional hypercube, or the folded $2D$-dimensional hypercube, or the intersection numbers of $\G$ satisfy $c_i = (q^i - 1)/(q - 1) \;  (1 \le i \le D)$, where $q \ge 2$ is an integer. In \cite{Mi3}, this result was extended to the case $D \ge 9$. Therefore, if $D \ge 9$, then $c_2 \ge 2$ (and so $\G$ has girth $4$). In \cite[Theorem 6.1]{Mi2} we proved that if $D=4$, then again $c_2 \ge 2$, hence also in this case girth equals $4$. If $D=3$, then $\G$ is clearly isomorphic to a generalized hexagon of order $(1,k-1)$. In the present paper we focus on the case $5 \le D \le 8$. It turns out that the case $D=5$ is the most difficult one. Therefore, we treate the cases $D \ge 6$ and $D=5$ in separate sections. In Section \ref{sec:bipDne5}, we show that if $D \ge 6$, then $c_2\ge 2$, and so $\G$ has girth 4. We show the same result for the case $D=5$ in Section \ref{sec:bipDeq5}.
\section{Preliminaries} \label{sec:prelim}

In this section, we review some definitions and basic concepts regarding distance-regular graphs. See the book of Brouwer, Cohen and Neumaier \cite{BCN} for more background information.

Throughout this paper, $\G = (X,R)$ will denote a finite, undirected, connected graph,
without loops or multiple edges, with vertex set $X$, edge set $R$, path length distance
function $\partial$, and diameter $D := max\{\partial(x, y)|x, y \in X\}$. For a vertex $x \in X$ define $\G_i(x)$ to be the set of vertices at distance $i$ from $x$. We abbreviate $\G(x) := \G_1(x)$. Let $k$ denote a non-negative integer. Then $\G$ is said to be {\em regular with valency} $k$ whenever $|\G(x)| = k$ for all $x \in X$. The graph $\G$ is said to be {\em distance-regular} whenever for all integers $h, i, j \;
(0 \le h, i, j \le D)$, and all $x, y \in X$ with $\partial(x, y) = h$, the number
$$
p^h_{ij} := |\G_i(x) \cap \G_j(y)|
$$
is independent of $x, y$. The constants $p^h_{ij}$ are known as the {\em intersection numbers} of $\G$.
For convenience, set $c_i := p^i_{1,i-1}$ for $1 \le i \le D$, $a_i := p^i_{1i}$ for $0 \le i \le D$, $b_i := p^i_{1,i+1}$ for $0 \le i \le D -1$, $k_i := p^0_{ii}$ for $0 \le i \le D$, and $c_0 = b_D = 0$. We observe $a_0 = 0$ and $c_1 = 1$. Moreover, $\G$ is regular with valency $k = b_0=k_1$, and $c_i + a_i + b_i = k$ for $0 \le i \le D$. It is well-known that 
\begin{equation}
	\label{eq:ki}
  k_i = \frac{b_0 b_1 \cdots b_{i-1}}{c_1 c_2 \cdots c_i}
\end{equation}
for $0 \le i \le D$. Observe that $\G$ is bipartite if and only if $a_i = 0$ for $0 \le i \le D$. In this case $b_i +c_i = k$ for $0 \le i \le D$. We say that $\G$ is {\em almost bipartite}, whenever $a_i = 0$ for $0 \le i \le D - 1$ and $a_D \ne 0$. The following families of distance-regular graphs will appear in this paper: hypercubes (see \cite[Section 9.2]{BCN}), folded hypercubes (see \cite[Subsection 9.2 D]{BCN}), Odd graphs (see \cite[Subsection 9.1 D]{BCN}), and generalized hexagons (see \cite[Section 6.5]{BCN}). Observe that if $D \le 2$, then the girth of $\G$ is at most $5$. Therefore, from now on we assume $\G$ is distance-regular with diameter $D \ge 3$ and valency $k \ge 3$.v Throughout the paper we will be extensively using the following result without explicitly referring to it. 

\begin{lemma}
	\label{lem:girth6}
	Let $\G$ denote a distance-regular graph with diameter $D \ge 3$ and valency $k \ge 3$. Then the following (i), (ii) hold.
	\begin{itemize}
		\item[(i)] The girth of $\G$ is at least $6$ if and only if $a_1=a_2=0$ and $c_2=1$.
		\item[(ii)] If $\G$ is isomorphic to the $D$-cube, or to the folded $2D$-cube, or to the folded $(2D+1)$-cube, then $a_1=0$ and $c_2=2$. In particular, the girth of $\G$ is equal to $4$.
	\end{itemize}
\end{lemma}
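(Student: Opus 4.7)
The plan is to prove part (i) by a chain of three successive equivalences, each matching one of the conditions $a_1=0$, $c_2=1$, $a_2=0$ with the absence of cycles of length $3$, $4$, and $5$ respectively, so that jointly they are equivalent to girth $\ge 6$. Part (ii) is then a direct verification using the intersection arrays of the three families recorded in \cite{BCN}.

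For (i), I would first observe that $a_1 = 0$ is equivalent to the absence of triangles: indeed, $a_1$ counts the common neighbors of an adjacent pair $x, y$, and such a common neighbor is precisely a third vertex of a triangle through the edge $xy$. Next, assuming $a_1 = 0$, I would show that $c_2 = 1$ is equivalent to the absence of $4$-cycles. A $4$-cycle $x, u, y, v, x$ is necessarily chordless (else $a_1 \ne 0$), so $\partial(x,y) = 2$ and $u, v$ are two distinct common neighbors, giving $c_2 \ge 2$; the converse is immediate. Finally, assuming $a_1 = 0$ and $c_2 = 1$, I would show that $a_2 = 0$ is equivalent to the absence of $5$-cycles. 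The forward direction (a $5$-cycle $x_0, x_1, x_2, x_3, x_4$ forces $a_2 \ge 1$) uses chord-freeness to conclude $\partial(x_0, x_2) = \partial(x_0, x_3) = 2$, so $x_3 \in \G(x_2) \cap \G_2(x_0)$. For the converse, given $x, y$ with $\partial(x,y) = 2$ and $z \in \G(y) \cap \G_2(x)$ witnessing $a_2 \ge 1$, I would let $u$ be the unique common neighbor of $x, y$ (using $c_2 = 1$) and $v$ the unique common neighbor of $x, z$, and assemble the cycle $x, u, y, z, v, x$.

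For (ii), I would recall from \cite{BCN} that the $D$-cube, the folded $2D$-cube, and the folded $(2D+1)$-cube all satisfy $c_i = i$ for $1 \le i \le D-1$, together with $a_1 = 0$; in particular $c_2 = 2$ in all three cases. Applying (i) in contrapositive form then forces the girth to be at most $4$, while $a_1 = 0$ rules out triangles, so the girth is exactly $4$. The only genuine technical point, which I regard as the main (minor) obstacle, is the vertex-distinctness check in the $5$-cycle constructed in the converse half of the third equivalence: one must rule out $u = v$ (which would force the triangle $u, y, z$, contradicting $a_1 = 0$) and the other obvious coincidences such as $u = z$ or $v = y$, each easily excluded from $\partial(x, y) = \partial(x, z) = 2$.
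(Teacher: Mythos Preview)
Your argument for both parts is correct. The paper itself does not supply a proof of this lemma; it records the two assertions as elementary facts and moves on, so there is no ``paper's approach'' to compare against. Your chain of three equivalences (no $3$-cycles $\Leftrightarrow a_1=0$; then no $4$-cycles $\Leftrightarrow c_2=1$; then no $5$-cycles $\Leftrightarrow a_2=0$) is exactly the standard way one unpacks the statement, and your distinctness checks in the $5$-cycle construction are the right ones.

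One small wording issue in part (ii): you write ``applying (i) in contrapositive form then forces the girth to be at most $4$.'' Strictly speaking, the contrapositive of the \emph{statement} of (i) only gives girth $\le 5$ once $c_2\ne 1$. What actually yields girth $\le 4$ is the intermediate equivalence you established in the course of proving (i), namely that under $a_1=0$ one has girth $\ge 5$ if and only if $c_2=1$. Since you proved that equivalence explicitly, the deduction is sound; just adjust the wording to cite that intermediate step rather than ``(i) in contrapositive form.''
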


We recall the Bose-Mesner algebra of $\G$. Let $\MX$ denote the $\CC$-algebra consisting of the matrices over $\CC$ which have rows and columns indexed by $X$. For $0 \le i \le D$ let $A_i$ denote the matrix in $\MX$ with $x, y$ entry
$$
\left( A_i\right) _{xy}=
\begin{cases}
	\hspace{0.2cm} 1 \hspace{0.5cm} \text{if} & \partial(x,y)=i,   \\
	\hspace{0.2cm} 0 \hspace{0.5cm} \text{if} &  \partial(x,y) \neq i
\end{cases} \qquad (x,y \in X).
$$
We call $A_i$ the $i$-th {\em distance matrix of} $\G$. We abbreviate $A := A_1$ and call $A$ the {\em adjacency matrix of} $\G$. The matrices $A_0, A_1, \ldots, A_D$ form a basis for a commutative semi-simple
$\CC$-algebra $M$, known as the {\em Bose-Mesner algebra}, see for example \cite[p. 44]{BCN}. By \cite[Theorem 2.6.1]{BCN} the algebra $M$ has a second basis $E_0, E_1, \ldots,E_D$, such that $E_i E_j=\delta_{ij} E_i \; (0 \le i, j \le D)$, $E_0 + E_1 + \cdots + E_D=I$, and $E_0 = |X|^{-1} J$, where $I$ and $J$ denote the identity and the all 1's matrix of $\MX$, respectively. The $E_0, E_1, \ldots, E_D$ are known as the {\em primitive idempotents of} $\G$, and we refer to $E_0$ as the {\em trivial idempotent} of $\G$. 

For $0 \le i \le D$ define a real number $\theta_i$ by $A = \sum_{i=0}^D \theta_i E_i$. Then $AE_i = E_iA = \theta_i E_i$ for $0 \le i \le D$. The scalars $\theta_0, \theta_1, \ldots, \theta_D$ are distinct since $A$ generates $M$ \cite[p. 197]{BI}. The scalars $\theta_0, \theta_1, \ldots, \theta_D$ are known as the {\em eigenvalues of} $\G$. We remark $k \ge \theta_i \ge -k$ for $0 \le i \le D$, and $\theta_0=k$ \cite[p. 45]{BCN}.


\section{The $Q$-polynomial property}
\label{sec:Q-poly}

In this section, we recall the $Q$-polynomial property of distance-regular graphs.
Let $\Gamma$ denote a distance-regular graph with diameter $D \ge 3$ and valency $k \ge 3$, and let $A_0, A_1, \ldots, A_D$ denote the distance matrices of $\G$. Observe that $A_i \circ A_j = \delta_{ij}A_i \; (0 \le i,j \le D)$, where $\circ$ denotes the entrywise multiplication, and so the algebra $M$ is closed under $\circ$. Let $E_0, E_1, \ldots, E_D$ denote the primitive idempotents of $\G$.
The {\it Krein parameters} $q_{ij}^h \; (0 \le h,i,j \le D)$ of $\Gamma$ are 
defined by
\begin{equation}
	E_i \circ E_j = |X|^{-1} \sum_{h=0}^D q_{ij}^h E_h \;\;\; 
	(0 \le i,j \le D).
\end{equation}
We say $\Gamma$ is 
{\it $Q$-polynomial} (with respect to the given ordering 
$E_0, E_1, \ldots, E_D$ of the primitive idempotents or with respect to the given ordering 
$\theta_0, \theta_1, \ldots, \theta_D$ of the corresponding eigenvalues), whenever for all 
distinct integers $i,j \; (0 \le i,j \le D)$ the following holds:
$q_{ij}^1 \ne 0 \; \hbox{ if and only if } \; |i-j|=1$. The following result motivated the research presented in this paper.

\begin{theorem}
	\label{thm:lewis}
	{\rm (\cite[Corollary 30]{Le})} Let $\Gamma$ denote a $Q$-polynomial distance-regular graph with diameter 
	$D \ge 3$ and valency $k \ge 3$. Then the girth of $\G$ is at most $6$.
\end{theorem}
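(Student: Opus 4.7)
The plan is a proof by contradiction: suppose $\G$ is $Q$-polynomial with $D \ge 3$, $k \ge 3$ and girth at least $7$, and derive an incompatibility with the $Q$-polynomial hypothesis.

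The first step is a combinatorial reduction. By Lemma~\ref{lem:girth6}(i), girth at least $6$ already gives $a_1 = a_2 = 0$ and $c_2 = 1$. The extra assumption of no $6$-cycle then forces $c_3 = 1$: indeed a $6$-cycle $v_1 v_2 \cdots v_6 v_1$ supplies the two distinct geodesic paths $v_1 v_2 v_3 v_4$ and $v_1 v_6 v_5 v_4$ joining the distance-$3$ pair $\{v_1, v_4\}$. Thus it suffices to rule out any $Q$-polynomial distance-regular graph with $D \ge 3$, $k \ge 3$, $a_1 = a_2 = 0$ and $c_2 = c_3 = 1$.

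For the algebraic input, I would fix a $Q$-polynomial ordering $E_0, \ldots, E_D$ of the primitive idempotents and associate to $E_1$ a sequence of dual eigenvalues $\theta_0^*, \ldots, \theta_D^*$ via the three-term recurrence for $E_1 \circ E_j$. Leonard-pair theory for $Q$-polynomial distance-regular graphs then expresses every intersection number $c_i$, $a_i$, $b_i$ as an explicit rational function of $\theta_i$ and $\theta_i^*$. Substituting the combinatorial constraints $a_1 = a_2 = 0$ and $c_2 = c_3 = 1$ into these formulas produces an overdetermined algebraic system on the primal and dual eigenvalue sequences.

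The core of the proof, and the step I expect to be the main obstacle, is extracting a clean contradiction from this system. I would expect it to collapse in one of three ways: a coincidence $\theta_i = \theta_j$ with $i \ne j$ (impossible since the eigenvalues are distinct), a negative Krein parameter $q_{11}^h$ (violating the Krein condition), or a forced degeneration to $k \le 2$. Any of these contradicts the standing hypotheses. A conceptually cleaner alternative, which is Lewis's own route in \cite{Le}, is to set up a homotopy calculus on closed walks in $\G$ in which the $Q$-polynomial relations furnish the local relators; the constraints $c_2 = c_3 = 1$ and $k \ge 3$ then leave no admissible relator to contract short closed walks, yielding the contradiction directly from the combinatorics of walks.
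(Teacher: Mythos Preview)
The paper does not supply its own proof of Theorem~\ref{thm:lewis}; the statement is quoted verbatim from \cite[Corollary~30]{Le} and used only as motivation. So there is no argument in the paper to compare your proposal against.

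On the merits of your sketch: the combinatorial reduction is correct. Girth at least $7$ is indeed equivalent, for a distance-regular graph, to $a_1=a_2=0$ and $c_2=c_3=1$, and your argument that $c_3\ge 2$ produces a genuine $6$-cycle (using $c_2=1$ to rule out $v_3=v_5$) is sound.

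The genuine gap is the second half. You set up the Leonard-pair parametrisation and then write that you ``would expect'' the system forced by $a_1=a_2=0$, $c_2=c_3=1$ to collapse via an eigenvalue coincidence, a Krein violation, or a degeneration to $k\le 2$---but you do not actually carry out any of these computations, nor even indicate which of the three occurs. This is precisely the content of the theorem, and without it you have only a restatement of the problem in algebraic coordinates. Listing Lewis's homotopy method as ``a conceptually cleaner alternative'' does not help either, since you do not execute that route. As it stands, the proposal is a reasonable outline of where a proof might live, but the step that does the work is missing entirely; to turn this into a proof you must either run the Leonard-pair algebra to an explicit contradiction or reproduce the homotopy argument from \cite{Le}.
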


\noindent
The following theorem and its corollary will be crucial in this paper.
\begin{theorem}
	\label{thm:girth6}
	{\rm (\cite[Theorem 6.3]{Mi1})} Let $\Gamma$ denote a $Q$-polynomial distance-regular graph with diameter $D \ge 3$, valency $k \ge 3$, and intersection number $a_1=0$. Then exactly one of the following (i)-(iii) holds.
	\begin{itemize}
		\item[(i)] $\Gamma$ is bipartite;
		\item[(ii)] $\Gamma$ is almost bipartite;
		\item[(iii)] $a_i \ne 0$ for $2 \le i \le D$.
	\end{itemize}
\end{theorem}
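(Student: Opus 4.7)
The three alternatives (i)--(iii) are pairwise mutually exclusive by inspection of $a_2$ and $a_D$: case (iii) forces $a_2 \ne 0$ while (i) and (ii) both give $a_2 = 0$ (since $D \ge 3$ means $2 \le D-1$), and within $\{\text{(i)},\text{(ii)}\}$ the value of $a_D$ distinguishes them. The substantive content of the theorem is therefore the existence statement: at least one of (i)--(iii) must hold, i.e.\ no ``mixed'' vanishing pattern of the sequence $(a_2, a_3, \ldots, a_D)$ is admissible. Concretely, the theorem reduces to proving the dichotomy that either $a_i = 0$ for every $i \in \{2, \ldots, D-1\}$, or $a_i \ne 0$ for every $i \in \{2, \ldots, D\}$; the value of $a_D$ in the first alternative then separates (i) from (ii).

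The plan is to argue by contrapositive: assume $a_r = 0$ for some $r$ with $2 \le r \le D-1$, and show that $a_i = 0$ for all $i \in \{2, \ldots, D-1\}$. The $Q$-polynomial hypothesis invokes Leonard's theorem, which supplies an Askey--Wilson type parametrization under which the intersection numbers $a_i, b_i, c_i$ become explicit rational functions of $i$ governed by a small number of parameters. The additional hypothesis $a_1 = 0$ imposes one equation on these parameters, eliminating one degree of freedom and producing a factorized expression for $a_i$ as a function of $i$. I would then analyze the zero set of this function on $\{2, \ldots, D\}$ and show that any vanishing at an intermediate index $r$ propagates to vanishing across the whole intermediate range $\{2, \ldots, D-1\}$, leaving the value $a_D$ as the only potentially nonzero entry.

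The main obstacle is the uniform handling of Leonard's classification, which splits into several subfamilies (the $q$-Racah family together with its classical and limiting degenerations such as Racah, Hahn, and dual Hahn). In each subfamily the closed form for $a_i$ takes a different shape, and the zero analysis must be carried out in a separate parameter regime. Particular care will be required at the boundary situations where the auxiliary $q$-parameter is a root of unity or where one of the Askey--Wilson parameters is sent to a degenerate limit, since these are precisely the regimes in which the bipartite configuration ($\theta_D = -\theta_0$, all $a_i = 0$) and the almost bipartite configuration appear as boundary specializations. Once each subfamily is dispatched, collating the results yields the trichotomy asserted by the theorem.
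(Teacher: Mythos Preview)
The paper does not contain a proof of this theorem: it is quoted verbatim from \cite[Theorem~6.3]{Mi1} and used as a black box, so there is no argument in the present paper to compare your proposal against. Your sketch---reducing the trichotomy to the claim that a single vanishing $a_r$ with $2\le r\le D-1$ forces $a_i=0$ for all $2\le i\le D-1$, and then attacking this via the Leonard/Askey--Wilson parametrization with a case split over the subfamilies---is the correct shape of argument and is essentially the strategy carried out in the original source \cite{Mi1}; there the author does exactly the case analysis you anticipate, and the boundary/degenerate regimes you flag are indeed where the work lies. As a plan your proposal is sound, but be aware that what you have written is an outline rather than a proof: the actual verification in each Leonard subfamily requires nontrivial computation, and you have not yet exhibited any of it.
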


\begin{corollary}
	\label{cor:girth6}
	Let $\Gamma$ denote a $Q$-polynomial distance-regular graph with diameter $D \ge 3$, valency $k \ge 3$, and girth $6$. Then $\G$ is either bipartite or almost bipartite.
\end{corollary}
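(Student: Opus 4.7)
The statement follows almost immediately from the two results cited just before it, so the plan is essentially to combine Lemma~\ref{lem:girth6}(i) with Theorem~\ref{thm:girth6} and note that one case of the trichotomy is ruled out.

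First, I would invoke Lemma~\ref{lem:girth6}(i). Since $\Gamma$ has girth equal to $6$, in particular its girth is at least $6$, so $a_1 = a_2 = 0$ and $c_2 = 1$. The key observation is that we get $a_2 = 0$ for free from the girth hypothesis.

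Next, since $a_1 = 0$ and $\Gamma$ is a $Q$-polynomial distance-regular graph with $D \ge 3$ and $k \ge 3$, Theorem~\ref{thm:girth6} applies and yields exactly one of the alternatives (i) $\Gamma$ is bipartite, (ii) $\Gamma$ is almost bipartite, or (iii) $a_i \ne 0$ for $2 \le i \le D$. The first step of the plan has already established $a_2 = 0$, which directly contradicts alternative (iii) (applied with $i=2$). Hence only (i) or (ii) can hold, which is the desired conclusion.

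There is no real obstacle here; the corollary is essentially a bookkeeping consequence of the two cited results. The only thing to double-check is that the hypotheses of Theorem~\ref{thm:girth6} (namely $D \ge 3$, $k \ge 3$, $Q$-polynomial, $a_1 = 0$) are all available, which they are: the first three are part of the assumptions on $\Gamma$ in the corollary, and $a_1 = 0$ comes from Lemma~\ref{lem:girth6}(i). The proof can therefore be written in a couple of lines.
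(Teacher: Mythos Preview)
Your proposal is correct and follows exactly the same approach as the paper's proof: invoke Lemma~\ref{lem:girth6}(i) to obtain $a_1=a_2=0$, then apply Theorem~\ref{thm:girth6} and use $a_2=0$ to eliminate alternative~(iii). The paper's version is simply more terse, writing the argument in two sentences.
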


\begin{proof}
	Recall that by Lemma \ref{lem:girth6} we have $a_1=a_2=0$. The result now follows from Theorem \ref{thm:girth6}.
\end{proof}

\smallskip \noindent
We now record several important results about $Q$-polynomial distance-regular graphs that we will use later in the paper. 

\begin{theorem}
	\label{thm:almbipLT}
	{\rm (\cite[Theorem 1.1, Remark 1.4]{LT})}
	Let $\G$ denote an almost bipartite distance-regular graph with diameter $D \ge 3$ and valency $k \ge 3$. Then $\G$ is $Q$-polynomial if and only if at least one of (i)--(iii) below holds.
	\begin{itemize}
		\item[(i)] $\G$ is the folded $(2D + 1)$-cube.
		\item[(ii)] $\G$ is the Odd graph on a set of cardinality $2D + 1$.
		\item[(iii)] $D = 3$ and there exist a complex scalar $\beta$, such that the intersection numbers of $\G$ satisfy $k = 1 + (\beta^2 - 1)(\beta(\beta + 2) - (\beta + 1) c_2)$ and  $c_3 = -(\beta + 1)(\beta^2 + \beta - 1 - (\beta + 1) c_2)$.
	\end{itemize}
    Suppose that (iii) holds but none of (i),(ii) do. Then $\beta$ is unique, integral and less than $-2$.
\end{theorem}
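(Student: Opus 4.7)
The plan is to invoke the parametric classification of intersection arrays of $Q$-polynomial distance-regular graphs due to Leonard (and refined by Terwilliger): for any such graph with $D \ge 3$, the intersection numbers $b_i$ and $c_i$ are rational functions of at most five scalars (typically denoted $q, s, s^*, r_1, r_2$, via the Askey--Wilson basis). Since $a_i = k - b_i - c_i$, each $a_i$ becomes a closed-form expression in those scalars. The almost bipartite hypothesis then supplies $D$ vanishing equations, $a_i = 0$ for $0 \le i \le D-1$, together with the strict inequality $a_D \ne 0$ used to exclude the bipartite locus.

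First I would split the analysis according to the basic Leonard subfamilies, depending on whether $q = 1$ or $q \ne 1$ and on which of $s, s^*, r_1, r_2$ vanish. In every subfamily $a_i$ is either a linear combination of (at most) three geometric sequences in $i$, or a polynomial of bounded degree in $i$. For $D \ge 4$ the vanishing of $a_i$ at four or more consecutive indices is therefore highly overdetermined: standard Vandermonde-type arguments force all but a handful of parameter loci to collapse. Matching the surviving loci with the known intersection arrays in \cite{BCN}, and discarding those with $a_D = 0$ or with non-integral $c_i, b_i$, should leave exactly the folded $(2D+1)$-cube and the Odd graph on $2D+1$ points, giving (i) and (ii).

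For $D = 3$ there are only three scalar equations $a_0 = a_1 = a_2 = 0$, so after imposing $a_3 \ne 0$ a one-parameter family of feasible arrays survives; introducing a convenient scalar $\beta$ to parametrize this family should yield the explicit formulas for $k$ and $c_3$ displayed in (iii). The main obstacle I anticipate is the bookkeeping of Leonard's subcases: one must carefully check positivity and integrality of all $c_i, b_i$, verify the Krein conditions $q_{ij}^h \ge 0$, and rule out sporadic arrays. For the concluding sentence of the theorem, assuming (iii) but neither (i) nor (ii), uniqueness of $\beta$ follows because the maps $\beta \mapsto k$ and $\beta \mapsto c_3$ determine the intersection array; integrality of $\beta$ then follows from the rationality and integrality of the multiplicities $k_i$ and of the eigenvalues of $\G$ (which are rational functions of $\beta$ of low degree), while $\beta < -2$ is extracted by discarding the finitely many small values of $\beta$ that either reproduce the families (i), (ii) or violate positivity of some $c_i$ or $b_i$.
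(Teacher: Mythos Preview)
The paper does not give its own proof of this theorem: it is quoted verbatim from Lang and Terwilliger \cite[Theorem 1.1, Remark 1.4]{LT} and used as a black box. So there is no in-paper argument to compare your proposal against.

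That said, your outline is broadly aligned with what Lang and Terwilliger actually do. Their proof also starts from Leonard's parametric description of $Q$-polynomial intersection arrays and feeds in the constraints $a_i=0$ for $0\le i\le D-1$, $a_D\ne 0$; the resulting case analysis (in their Leonard-parameter notation) collapses for $D\ge 4$ to the two infinite families and leaves a one-parameter residual family at $D=3$, which they express through the scalar $\beta$. Your plan is a fair high-level sketch of that argument; the only caveat is that the genuine work lies precisely in the ``bookkeeping of Leonard's subcases'' you flag as an obstacle, and that part is substantial enough that your proposal should be read as a proof \emph{strategy} rather than a proof. For the purposes of this paper, simply citing \cite{LT} is what is expected.
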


\begin{lemma}
	\label{lem:caug}
	{\rm (\cite[Lemma 3.2, Lemma 3.3]{Ca1})} Let $\Gamma$ denote a bipartite distance-regular graph with diameter $D \ge 4$, valency 
	$k \ge 3$, and intersection numbers $b_i, c_i$ $(0 \le i \le D)$. 
	We assume $\Gamma$ is $Q$-polynomial with respect to 
	$E_0, E_1, \ldots, E_D$. For $0 \le i \le D$ let $\theta_i$ denote the eigenvalue associated 
	with $E_i$. Assume $\Gamma$ is not the $D$-cube or 
	the folded $2D$-cube. Then there exist scalars $q,s^* \in \RR$ such
	that (i)--(iii) hold.
	\begin{itemize}
		\item[(i)]   $|q| > 1, \;$ $s^* q^i \ne 1 \qquad (2 \le i \le 2D+1)$;
		\item[(ii)]  $\theta_i=h(q^{D-i}-q^i)$
		for $0 \le i \le D$, where 
		$$
		h=\frac{1-s^* q^3}{(q-1)(1-s^* q^{D+2})}.
		$$
		\item[(iii)] $k=c_D=h(q^D-1)$, and
		$$
		c_i=\frac{h(q^i-1)(1-s^* q^{D+i+1})}{1-s^* q^{2i+1}}, \qquad 
		b_i=\frac{h(q^D-q^i)(1-s^* q^{i+1})}{1-s^* q^{2i+1}} \qquad (1 \le i \le D-1).
		$$
	\end{itemize}
\end{lemma}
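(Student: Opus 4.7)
The plan is to derive the closed form of the intersection numbers by first describing the eigenvalues and dual eigenvalues of $\G$ in closed form, and then translating this information into the $c_i$ and $b_i$ via the bipartite identity $c_i + b_i = k$.

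For the first step, the $Q$-polynomial property (together with the fact that $A$ generates the Bose-Mesner algebra $M$) yields a pair of three-term recurrences, one for the eigenvalues $\theta_i$ and one for the dual eigenvalues $\theta_i^*$ associated with $E_1$, whose coefficients are constrained by a common parameter coming from the recurrence satisfied by $E_1 \circ E_i$. In the generic regime these recurrences admit a basis of solutions of the form $q^i$ and $q^{-i}$, so that $\theta_i = \alpha + \mu q^i + \nu q^{-i}$ for suitable scalars $\alpha, \mu, \nu$, and similarly for $\theta_i^*$. Imposing the bipartite symmetry $\theta_{D-i} = -\theta_i$ (which follows from $a_i = 0$ for all $i$) forces $\alpha = 0$ and $\nu = -\mu q^D$, yielding the shape $\theta_i = h(q^{D-i} - q^i)$. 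The dual side contributes a second free parameter $s^*$ parametrising a ratio of the coefficients appearing in $\theta_i^*$; the normalisation of $h$ given in (ii) is then fixed by demanding $\theta_0 = k$ together with the bipartite relation $k = c_D$.

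For the second step, I would use the standard formulas expressing $c_i$ (and hence $b_i = k - c_i$) as a rational function of the eigenvalues and dual eigenvalues; see for instance \cite[Section 4.1]{BCN}. Substituting the closed-form expressions from the first step into these formulas, and simplifying with identities such as $q^{D-i} - q^i = (q^{D-2i} - 1)q^i$, produces the rational expressions in (iii) after a finite amount of bookkeeping.

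Finally, I would deal with the excluded cases. The conditions $|q| > 1$ and $s^* q^i \ne 1$ for $2 \le i \le 2D + 1$ correspond exactly to non-degeneracy of the parametrisation: if $|q| = 1$ or some $s^* q^i = 1$, then the closed forms collapse or produce vanishing denominators, and a direct inspection of the surviving parameter arrays identifies $\G$ as either the $D$-cube or the folded $2D$-cube. Since these two graphs are ruled out by hypothesis, we may absorb the natural symmetry $q \leftrightarrow 1/q$ (which leaves the formulas invariant) to enforce $|q| > 1$. The main obstacle is the first step: establishing rigorously that the $Q$-polynomial property forces recurrences of the exponential shape required, which is in essence the content of Leonard's classification theorem and Terwilliger's parameter-array machinery. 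Once this is in hand, the passage to the explicit formulas for $c_i$ and $b_i$ is a careful but largely formal computation.
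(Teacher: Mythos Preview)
The paper does not prove this lemma at all; it is quoted verbatim from Caughman \cite[Lemma~3.2, Lemma~3.3]{Ca1} and used as a black box. Your sketch is in fact a reasonable outline of how Caughman's argument proceeds (via Leonard's theorem and the Terwilliger parameter-array formalism, combined with the bipartite symmetry $\theta_{D-i}=-\theta_i$), so there is no discrepancy to report beyond the fact that the present paper simply imports the result rather than reproving it.
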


In the proposition below we collect some results about the scalars $q$ and $s^*$ from Lemma \ref{lem:caug} that we will need later in the paper. 

\begin{proposition}
	\label{prop:sandq}
	{\rm (\cite[Theorem 4.1, Lemma 5.1]{Ca1} , \cite[Lemma 3.2, Lemma 3.3]{Ca1a})}
	Let $\Gamma$ denote a bipartite distance-regular graph with diameter $D \ge 4$, valency 
	$k \ge 3$, and intersection numbers $b_i, c_i$ $(0 \le i \le D)$. 
	We assume $\Gamma$ is $Q$-polynomial with respect to 
	$E_0, E_1, \ldots, E_D$. For $0 \le i \le D$ let $\theta_i$ denote the eigenvalue associated 
	with $E_i$. Assume $\Gamma$ is not the $D$-cube or the folded $2D$-cube. Let $q, s^*$ be scalars as in Lemma \ref{lem:caug}. Define $\beta=q+1/q$. Then the following (i)--(iv) hold.
	\begin{itemize}
		\item[(i)] If $D \ge 6$, then $q > 1$.
		\item[(ii)] If $q > 1$, then $-q^{-D-1} \le s^* < q^{-2D-1}$.
		\item[(iii)] $\theta_1 \ne -1$ and 
		$$
		  \beta=\frac{\theta_1^2+c_2 \theta_1+b_2 (k-2)}{b_2(\theta_1+1)}.
		$$
		\item[(iv)] If $D \ge 5$, then $\theta_0, \theta_1, \ldots, \theta_D$ are integers.
	\end{itemize}
In particular, if $D \ge 5$, then $\beta \in \QQ$.
\end{proposition}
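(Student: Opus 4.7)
This proposition is a compilation of four items, each established in the cited papers \cite{Ca1, Ca1a} of Caughman; the plan is to extract each part from its source and then derive the closing remark on $\beta \in \QQ$ as an immediate consequence.

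For item (iii), the approach is direct algebraic verification using Lemma \ref{lem:caug}. From that lemma one has explicit rational expressions for $k$, $\theta_1$, $c_2$, and $b_2$ in terms of $h$, $q$, and $s^*$. Substituting, clearing the common denominator $1 - s^* q^5$, and exploiting the factorizations $q^{D-1} - q = q(q^{D-2} - 1)$ and $q^D - q^2 = q^2(q^{D-2} - 1)$, I would verify the polynomial identity $\theta_1^2 + c_2 \theta_1 + b_2(k - 2) = (q + q^{-1}) b_2 (\theta_1 + 1)$. The nonvanishing $\theta_1 \ne -1$ is checked first: if $\theta_1 = -1$, then combining $h(q^{D-1} - q) = -1$ with $k = h(q^D - 1) \ge 3$ and $|q| > 1$ leads to inconsistent constraints.

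For item (iv), integrality of $\theta_0, \ldots, \theta_D$ when $D \ge 5$ is pulled from \cite[Lemmas 3.2, 3.3]{Ca1a}. The argument there combines the three-term recurrence $\theta_{i-1} + \theta_{i+1} = \beta \theta_i$, which follows from Lemma \ref{lem:caug}(ii) by direct computation, with the rational formula in (iii) applied to the integer intersection numbers $b_2, c_2, k$, together with the fact that every $\theta_i$ is an algebraic integer (being an eigenvalue of an integer matrix); the hypothesis $D \ge 5$ eliminates small-diameter exceptions. The closing statement $\beta \in \QQ$ for $D \ge 5$ is then immediate from (iii) and (iv): the right-hand side of the displayed formula is a quotient of integers with nonzero denominator.

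For items (i) and (ii), I would invoke \cite[Theorem 4.1, Lemma 5.1]{Ca1}. The constraint $q > 1$ in (i) when $D \ge 6$ is proved by exhibiting a violated Krein positivity condition in the case $q < -1$; this cutoff is sharp since at $D = 4, 5$ negative $q$ is permitted by the spectrum. For (ii), the bounds $-q^{-D-1} \le s^* < q^{-2D-1}$ come from imposing positivity of $b_i$ and $c_i$ using the expressions in Lemma \ref{lem:caug}(iii) throughout $1 \le i \le D-1$, together with the nonvanishing constraint from Lemma \ref{lem:caug}(i). The main obstacle in faithfully reproducing the proposition would be tracking Caughman's Krein analysis for (i), where the diameter threshold is tight and any simplification risks losing the sharpness.
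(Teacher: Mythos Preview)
Your proposal is correct and matches the paper's approach: the paper does not supply its own proof of this proposition at all, treating it as a compilation of results imported directly from \cite{Ca1, Ca1a}, with the final clause $\beta \in \QQ$ following immediately from (iii) and (iv). Your plan to source each item from the cited references and then read off the rationality of $\beta$ from the integer data in (iii) is exactly what the paper intends.
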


\section{$\G$ is almost bipartite}
\label{sec:almostbip}

In this section we consider the case where $\G$ is an almost bipartite $Q$-polynomial distance-regular graph with diameter $D \ge 3$, valency $k \ge 3$, and girth $6$. Our main result is the following theorem.

\begin{theorem}
	\label{thm:almost}
	Let $\G$ denote an almost bipartite $Q$-polynomial distance-regular graph with diameter $D \ge 3$, valency $k \ge 3$, and girth $6$.  Then $\G$ is  the Odd graph on a set with cardinality $2D+1$.
\end{theorem}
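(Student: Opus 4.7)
The strategy is to invoke Theorem~\ref{thm:almbipLT} and analyse the regimes $D\ge 4$ and $D=3$ separately. For $D\ge 4$, only alternatives (i) and (ii) of Theorem~\ref{thm:almbipLT} can occur. The folded $(2D+1)$-cube has $c_2=2$ by Lemma~\ref{lem:girth6}(ii), hence girth $4$; this rules out (i). Therefore $\G$ must be the Odd graph on a $(2D+1)$-set, and the theorem follows in this range.

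The substantive case is $D=3$. Alternative (i) is again excluded by Lemma~\ref{lem:girth6}(ii), and alternative (ii) directly gives the conclusion, so the task reduces to ruling out the scenario where (iii) holds but neither (i) nor (ii) does. By Theorem~\ref{thm:almbipLT} the scalar $\beta$ is then an integer with $\beta\le -3$. Writing $\beta=-m$ with $m\ge 3$ and substituting $c_2=1$ (forced by girth $6$ via Lemma~\ref{lem:girth6}(i)) into the formulas in (iii) yields
\[
k \;=\; 1 + (m^2-1)(m^2-m-1), \qquad c_3 \;=\; (m-1)(m^2-2).
\]
I would then derive a contradiction from the integrality of $k_3 = k(k-1)^2/c_3$, which follows from~(\ref{eq:ki}).

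Since $k-1=(m-1)(m+1)(m^2-m-1)$, the factor $m-1$ of $c_3$ is absorbed into $(k-1)^2$, and it suffices to show that $(m^2-2)\mid k(k-1)$. A short gcd computation---using $m^2\equiv 2$ modulo $m-1$, $m+1$ and $m^2-m-1$, in each case giving residue $-1$---shows that $m^2-2$ is coprime to each of these three quantities. Reducing modulo $m^2-2$ via $m^2\equiv 2$ gives $k\equiv 2-m$ and $k-1\equiv 1-m$, whence
\[
k(k-1)\;\equiv\;(2-m)(1-m)\;\equiv\;4-3m \pmod{m^2-2}.
\]
For $m\ge 3$ one has $0<3m-4<m^2-2$, so $(m^2-2)\nmid k(k-1)$, a contradiction.

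The only real obstacle is this $D=3$ sub-case: the $D\ge 4$ part reduces immediately to the existing classification, while in $D=3$ one must convert the one-parameter family of arrays allowed by Theorem~\ref{thm:almbipLT}(iii) into a crisp integer-theoretic obstruction. The modular calculation above accomplishes this uniformly for all $m\ge 3$, with no case analysis on small values.
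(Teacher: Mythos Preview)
Your proposal is correct and follows essentially the same route as the paper: invoke Theorem~\ref{thm:almbipLT}, dispatch $D\ge 4$ by excluding the folded cube on girth grounds, and for $D=3$ in case~(iii) derive a contradiction from the integrality of $k_3$ via a residue computation modulo $\beta^2-2$. The paper carries out polynomial long division of $k_3$ in $\beta$ to isolate the non-integral term $(3\beta+4)/(\beta^2-2)$, while you substitute $\beta=-m$ and use a gcd argument to reduce the divisibility condition to $(m^2-2)\mid k(k-1)$ with residue $4-3m=-(3\beta+4)$; this is the same computation in a different wrapper (note your phrase ``it suffices to show that $(m^2-2)\mid k(k-1)$'' should read ``it is necessary that'', since you then show this fails).
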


\begin{proof}
	Consider  first the case $D=3$. We assume that $\G$ is not the Odd graph on a set with cardinality $7$ and obtain a contradiction. Recall that $\G$ is also not the folded $7$-cube, as the folded $7$-cube has intersection number $c_2=2$, and so its girth equals $4$. As $c_2=1$, we therefore obtain by Theorem \ref{thm:almbipLT} that the intersection numbers of $\G$ are given by $b_0 = 1 + (\beta^2 - 1)(\beta^2 + \beta - 1)$ and $c_3=-(\beta+1)(\beta^2 - 2)$, where $\beta$ is an integer less than $-2$. By \eqref{eq:ki} we find
	$$
	k_3 =\frac{b_0 b_1 b_2}{c_1 c_2 c_3} = \frac{b_0(b_0 - 1)^2}{c_3} =-1-5\beta+7\beta^3-6\beta^4-7\beta^5+5\beta^6+3\beta^7-2\beta^8-\beta^9-\frac{3\beta+4}{\beta^2-2},
	$$
	implying that $(3\beta+4)/(\beta^2-2)$ is an integer. However, this is a contradiction, since $\beta^2-2 > |3\beta+4|$ as $\beta \le -3$. This shows that $\G$ is the Odd graph on a set of cardinality $7$. 
	
	Next we consider the case $D \ge 4$. By Theorem \ref{thm:almbipLT}, $\G$ is either the folded $(2D+1)$-cube, or the Odd graph on a set with cardinality $2D+1$. Recall that the girth of the folded $(2D+1)$-cube is $4$, and so the result follows.
\end{proof}


\section{$\G$ is bipartite: case $D \ne 5$}
\label{sec:bipDne5}

In this section, we consider the case where $\G$ is a bipartite $Q$-polynomial distance-regular graph
with diameter $D \ne 5$ and valency $k \ge 3$. Our main result is that if $\G$ has girth $6$, then $D=3$ and $\G$ is isomorphic to  a generalized hexagon of order $(1, k - 1)$. We split our argument in a series of propositions. 

\begin{proposition}
	\label{prop:D=3}
	Let $\G$ denote a bipartite $Q$-polynomial distance-regular graph with diameter $D = 3$, valency $k \ge 3$, and girth $6$. Then $\G$ is isomorphic to  a generalized hexagon of order $(1, k - 1)$.
\end{proposition}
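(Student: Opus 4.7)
The plan is to prove the proposition by directly computing the intersection array of $\G$ and observing that it coincides with the intersection array of a generalized hexagon of order $(1, k-1)$.

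Since $\G$ is bipartite, $a_i = 0$ for $0 \le i \le 3$, and so $b_i + c_i = k$ in that range. The diameter condition $D = 3$ gives $b_3 = 0$, hence $c_3 = k$. By Lemma~\ref{lem:girth6}(i), the girth-$6$ hypothesis forces $c_2 = 1$, so $b_2 = k - 1$; together with the universal $c_1 = 1$ this yields $b_1 = k - 1$ as well. Thus the intersection array of $\G$ is
$$
\{k,\, k-1,\, k-1;\, 1,\, 1,\, k\}.
$$
This is precisely the intersection array of a generalized hexagon of order $(1, k-1)$ in the sense of \cite[Section 6.5]{BCN}, and so $\G$ is isomorphic to such a graph.

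There is essentially no obstacle to overcome here: the conclusion is immediate once the intersection array has been read off from bipartiteness, diameter $3$, and girth $6$, and the identification with a generalized hexagon of order $(1, k-1)$ is then immediate from the definition. It is worth noting that the $Q$-polynomial hypothesis plays no explicit role in this argument; it is essential elsewhere in the paper (for instance, to invoke Corollary~\ref{cor:girth6} so as to split into the bipartite and almost bipartite cases, and to apply Lemma~\ref{lem:caug} and Proposition~\ref{prop:sandq} in the higher-diameter bipartite analysis), but it is not needed for the $D = 3$ bipartite case itself.
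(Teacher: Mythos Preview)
Your proof is correct and follows essentially the same approach as the paper: both arguments read off the intersection array $\{k,k-1,k-1;1,1,k\}$ from bipartiteness, diameter $3$, and $c_2=1$, and then identify $\G$ with a generalized hexagon of order $(1,k-1)$ via \cite[Section~6.5]{BCN}. Your additional remark that the $Q$-polynomial hypothesis is not actually used in this step is accurate.
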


\begin{proof}
	Since $c_2=1$, the intersection numbers of $\G$ satisfy $k=b_0=c_3=b_1+1=b_2+1$. It follows that $\G$ is a generalized hexagon of order $(1, k - 1)$, see \cite[Subsection 6.5]{BCN}.
\end{proof}

\begin{proposition}
	\label{prop:D=4}
	{\rm (\cite[Theorem 6.1]{Mi2})}  There are no bipartite $Q$-polynomial distance-regular graphs with diameter $D = 4$, valency $k \ge 3$, and girth $6$. 
\end{proposition}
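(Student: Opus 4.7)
The plan is to combine two independent expressions for the second eigenvalue $\theta_1$ of $\G$ and derive a cubic identity in $c_3$ that has no admissible integer solutions for $k \ge 3$.

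Since girth $6$ forces $c_1 = c_2 = 1$ and $\G$ bipartite gives $b_i = k - c_i$, the intersection array is $\{k, k-1, k-1, k-c_3;\, 1, 1, c_3, k\}$ with $1 \le c_3 \le k$. The $4$-cube and the folded $8$-cube both have $c_2 = 2$, so $\G$ is neither of them, and Lemma~\ref{lem:caug} together with Proposition~\ref{prop:sandq} both apply.

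For the first relation I would run the three-term recurrence $c_{i+1}v_{i+1}(\theta) = \theta v_i(\theta) - b_{i-1}v_{i-1}(\theta)$ up through $i = 4$ and impose the condition $\theta v_4 = b_3 v_3$. With the specific intersection array above, the resulting polynomial factors as $\theta(\theta^2 - k^2)(\theta^2 - (2k - 1 - c_3)) = 0$, so the non-trivial non-zero eigenvalue satisfies
$$
\theta_1^2 = 2k - 1 - c_3.
$$
For the second relation, Lemma~\ref{lem:caug}(ii) at $D = 4$ yields $\theta_1 = h(q^3 - q)$ and $k = h(q^4 - 1)$, whence $\theta_1/k = q/(q^2+1) = 1/\beta$; equivalently, $\beta = k/\theta_1$. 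Substituting this into the formula for $\beta$ in Proposition~\ref{prop:sandq}(iii) (with $c_2 = 1$ and $b_2 = k-1$) and clearing denominators produces
$$
\theta_1^3 + \theta_1^2 - 2(k-1)\theta_1 - k(k-1) = 0.
$$
Using the first relation to kill the cubic and quadratic terms of $\theta_1$ reduces this to
$$
\theta_1(1 - c_3) = k^2 - 3k + 1 + c_3;
$$
squaring and applying $\theta_1^2 = 2k - 1 - c_3$ one last time eliminates $\theta_1$ and leaves the cubic
$$
g(c_3) := c_3^3 - 2k c_3^2 + (2k^2 - 2k + 1)\,c_3 + k^4 - 6k^3 + 11k^2 - 8k + 2 = 0.
$$

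The final step is to show $g$ has no root in $\{1, 2, \ldots, k\}$ when $k \ge 3$. The discriminant of $g'(c_3) = 3c_3^2 - 4k c_3 + (2k^2 - 2k + 1)$ is $-4(2k^2 - 6k + 3)$, which is strictly negative for $k \ge 3$, so $g$ is strictly increasing on $\RR$. The decisive computation is the clean identity $g(1) = (k-1)^2(k-2)^2$, which is positive for $k \ge 3$ and, together with monotonicity, forces $g(c_3) > 0$ for every $c_3 \ge 1$ --- the desired contradiction. I expect recognizing the factorization $g(1) = (k-1)^2(k-2)^2$ to be the main obstacle: without it, the cubic would look generic, and excluding integer roots in $[1,k]$ would require a more intricate bounding argument rather than a one-line sign check.
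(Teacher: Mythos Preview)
Your argument is correct. I checked each step: the factorisation of the characteristic polynomial as $\theta(\theta^2-k^2)(\theta^2-(2k-1-c_3))$, the identity $\beta=k/\theta_1$ coming from Lemma~\ref{lem:caug}(ii), the cubic in $\theta_1$ obtained from Proposition~\ref{prop:sandq}(iii), the reduction to $g(c_3)=0$, the sign of the discriminant of $g'$ for $k\ge 3$, and the factorisation $g(1)=(k-1)^2(k-2)^2$ all hold as stated. The squaring step is harmless here because you are proving non-existence, so no solutions of the squared equation certainly implies no solutions of the original.

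As for the comparison: the paper does not supply its own proof of this proposition at all---it simply imports \cite[Theorem~6.1]{Mi2}. What you have written therefore upgrades a bare citation to a self-contained argument that uses only Lemma~\ref{lem:caug} and Proposition~\ref{prop:sandq}(iii), both already present in the paper. The cited paper \cite{Mi2} works with the $(q,s^*)$-parametrisation throughout and treats $c_2=1$ in general diameter; your route is more direct for $D=4$ because the spectrum collapses to a single unknown $\theta_1^2=2k-1-c_3$, which lets you eliminate $\theta_1$ cheaply. The price is that your method is specific to $D=4$, whereas the machinery in \cite{Mi2} is designed to say something for larger diameters as well. For the purposes of this proposition, however, your proof is both complete and pleasantly elementary, and the identity $g(1)=(k-1)^2(k-2)^2$ is exactly the kind of closed-form check that makes the endgame a one-liner.
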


We now consider the case $D \ge 6$ (although some of the results in the reminder of this section hold for $D=5$ as well). We show that in such a graph we have $c_2 \ge 2$ (and consequently the girth of $\G$ is equal to $4$). As already mentioned in the Introduction, if $D \ge 9$ this follows from \cite{Ca1} and \cite{Mi3}, but for the convenience of the reader we include the case $D \ge 9$ in our treatment below. 

\begin{lemma}
	\label{lem:c2=1}
	Let $\G$ denote a bipartite $Q$-polynomial distance-regular graph with diameter $D \ge 5$ and valency $k \ge 3$, which is not the $D$-cube or the folded $2D$-cube. Let $s^*,q$ be scalars as in Lemma \ref{lem:caug}. Abbreviate $\alpha=1 + q - q^2 - q^{D-1} + q^D + q^{D+1}$. Then $c_2=1$ if and only if
	$$
	s^* = \frac{\alpha \pm \sqrt{\alpha^2 - 4 q^{D+1}}}{2q^{D+3}}.
	$$
\end{lemma}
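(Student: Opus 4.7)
The plan is a direct algebraic computation. Using Lemma~\ref{lem:caug}(iii) with $i=2$ together with the explicit formula for $h$, I would write
$$
c_2 \;=\; \frac{h(q^2-1)(1-s^* q^{D+3})}{1-s^* q^5} \;=\; \frac{(q+1)(1-s^* q^3)(1-s^* q^{D+3})}{(1-s^* q^{D+2})(1-s^* q^5)}.
$$
Note that the denominators are nonzero by Lemma~\ref{lem:caug}(i) (since $s^*q^i \ne 1$ for $2 \le i \le 2D+1$), so the condition $c_2=1$ is equivalent to the polynomial identity
$$
(q+1)(1-s^* q^3)(1-s^* q^{D+3}) \;=\; (1-s^* q^{D+2})(1-s^* q^5).
$$

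Next I would expand both sides and collect by powers of $s^*$. The constant terms give $(q+1)-1=q$, the coefficient of $(s^*)^2$ simplifies to $(q+1)q^{D+6}-q^{D+7}=q^{D+6}$, and the coefficient of $s^*$ works out to $q^5-q^4-q^3+q^{D+2}-q^{D+3}-q^{D+4}$. Dividing through by $q$ (which is nonzero since $|q|>1$), the equation becomes
$$
q^{D+5}(s^*)^2 \;+\; \bigl(q^4-q^3-q^2+q^{D+1}-q^{D+2}-q^{D+3}\bigr)s^* \;+\; 1 \;=\; 0.
$$
Factoring $q^2$ out of the middle coefficient and comparing with the definition $\alpha=1+q-q^2-q^{D-1}+q^D+q^{D+1}$, one sees that $q^4-q^3-q^2+q^{D+1}-q^{D+2}-q^{D+3} = -q^2\alpha$. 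Hence $c_2=1$ is equivalent to the quadratic
$$
q^{D+5}(s^*)^2 \;-\; q^2\alpha\, s^* \;+\; 1 \;=\; 0.
$$

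Applying the quadratic formula and pulling the factor $q^2$ out of the square root and the leading coefficient yields exactly
$$
s^* \;=\; \frac{q^2\alpha \pm q^2\sqrt{\alpha^2-4q^{D+1}}}{2q^{D+5}} \;=\; \frac{\alpha \pm \sqrt{\alpha^2-4q^{D+1}}}{2q^{D+3}},
$$
which is the desired expression. The argument is reversible, so both directions of the equivalence follow simultaneously.

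The entire proof is mechanical once the formula for $c_2$ is written down. The only real obstacle is bookkeeping: keeping the signs straight when collecting the $s^*$-coefficient and recognising the exponent pattern so that it matches $-q^2\alpha$. No structural use of $D\ge 5$ is required at this step beyond ensuring that Lemma~\ref{lem:caug} applies, although the restriction on $s^*$ in Proposition~\ref{prop:sandq}(ii) will be invoked later to determine which of the two roots is admissible.
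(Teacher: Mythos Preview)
Your proof is correct and follows exactly the approach indicated in the paper: the paper's proof is the single line ``This follows immediately from Lemma~\ref{lem:caug}(iii),'' and you have supplied the routine algebraic details of setting $c_2=1$ in that formula and solving the resulting quadratic in $s^*$. The bookkeeping (expansion, identification of the middle coefficient as $-q^2\alpha$, and application of the quadratic formula) is all accurate.
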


\begin{proof}
	This follows immediately from Lemma  \ref{lem:caug}(iii).
\end{proof}

\begin{proposition}
	\label{prop:q>1}
	Let $\G$ denote a bipartite $Q$-polynomial distance-regular graph with diameter $D \ge 5$ and valency $k \ge 3$, which is not the $D$-cube or the folded $2D$-cube. Let $s^*,q$ be scalars as in Lemma \ref{lem:caug}, and recall that $|q| > 1$. If $q > 1$, then $c_2 \ge 2$.
\end{proposition}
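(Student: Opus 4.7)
The natural plan is a proof by contradiction: assume $c_2 = 1$, use Lemma \ref{lem:c2=1} to write $s^*$ in closed form, and show that the resulting value violates the upper bound on $s^*$ given by Proposition \ref{prop:sandq}(ii). Concretely, if $c_2 = 1$ then Lemma \ref{lem:c2=1} forces
$$
s^*=\frac{\alpha \pm \sqrt{\alpha^2 - 4q^{D+1}}}{2q^{D+3}}, \qquad \alpha = 1 + q - q^2 - q^{D-1} + q^D + q^{D+1}.
$$
Since $s^*$ is real by Lemma \ref{lem:caug}, I may assume the discriminant $\alpha^2 - 4q^{D+1}$ is nonnegative; otherwise the contradiction is immediate. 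Rewriting $\alpha = (1+q-q^2) + q^{D-1}(q^2+q-1)$ shows that $\alpha > 2q > 2$ whenever $q > 1$, so both candidate values are positive; let $s^{*}_{-}$ denote the smaller one (obtained with the $-$ sign).

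I aim to prove the stronger statement $s^{*}_{-} > q^{-2D-1}$; this at once forces both candidate values to violate the upper bound $s^* < q^{-2D-1}$ of Proposition \ref{prop:sandq}(ii), producing the desired contradiction. Writing the inequality $s^{*}_{-} > q^{-2D-1}$ as
$$
q^{D-2}\alpha - 2 > q^{D-2}\sqrt{\alpha^2 - 4q^{D+1}}
$$
and using that the left-hand side is positive (since $q^{D-2}\alpha \ge \alpha > 2$), I may square both sides to reduce to the equivalent polynomial inequality
$$
q^{3D-3} - q^{D-2}\alpha + 1 > 0.
$$

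The main obstacle is the verification of this last inequality. The key observation, which drives the entire argument, is the factorisation
$$
q^{3D-3} - q^{D-2}\alpha + 1 = (q^{D-2} - 1)(q^{D-1} - 1)(q^D + 1),
$$
obtained after substituting the definition of $\alpha$ and regrouping the eight resulting monomials so as to extract $(q^{D-2} - 1)$ as a common factor. A useful heuristic for discovering it is that the original inequality degenerates to equality at $q = 1$ (where $\alpha = 2$ and the discriminant vanishes), suggesting that $(q-1)$ should divide the polynomial with multiplicity. Once the factorisation is in hand, strict positivity for $q > 1$ and $D \ge 3$ is immediate, and the proof is complete.
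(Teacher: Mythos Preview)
Your proof is correct and follows essentially the same route as the paper's: both assume $c_2=1$, show that the smaller root $s^*_-$ from Lemma~\ref{lem:c2=1} already exceeds $q^{-2D-1}$, and rely on the same key factorisation $(q^{D-2}-1)(q^{D-1}-1)(q^D+1)$ to verify the squared inequality. The only cosmetic difference is that you expose the intermediate polynomial $q^{3D-3}-q^{D-2}\alpha+1$ before factoring, whereas the paper writes the factored form of $(\alpha q^{D-2}-2)^2 - q^{2D-4}(\alpha^2-4q^{D+1})$ directly.
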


\begin{proof}
Recall the scalar $\alpha=1 + q - q^2 - q^{D-1} + q^D + q^{D+1}$  from Lemma \ref{lem:c2=1} and observe that $\alpha \ge 2$ as $q > 1$. We claim 
$$
\frac{\alpha - \sqrt{\alpha^2 - 4 q^{D+1}}}{2q^{D+3}} > q^{-2D-1}.
$$
First observe that $(\alpha q^{D-2}-2)^2-q^{2D-4}(\alpha^2-4q^{D+1}) = 4(q^D +1)(q^{D-1}-1)(q^{D-2}-1) > 0$. Therefore,
$$
  (\alpha q^{D-2} -2)^2>q^{2D-4}(\alpha^2-4q^{D+1}).
$$
Furthermore, since $\alpha \ge 2$ and $q > 1$, we have that $\alpha q^{D-2}-2>0$ and $\alpha^2-4q^{D+1} > 0$. Therefore, the above inequality implies
$$
\alpha q^{D-2} - 2 > q^{D-2} \sqrt{\alpha^2-4q^{D+1}},
$$
and the claim follows. 

If $c_2=1$, then the above claim together with Lemma \ref{lem:c2=1} implies that $s^* > q^{-2D-1}$, contradicting Proposition \ref{prop:sandq}(ii). This shows that $c_2 \ge 2$.
\end{proof}

\begin{theorem}
	\label{thm:q>1a}
	Let $\G$ be a bipartite $Q$-polynomial distance-regular graph with diameter $D$ and valency $k \ge 3$. If $D \ge 6$, then $c_2 \ge 2$. In particular, the girth of $\G$ equals $4$.
\end{theorem}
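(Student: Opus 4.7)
The plan is to observe that this theorem is a near-immediate consolidation of the preceding results in this section, together with Lemma~\ref{lem:girth6}(ii). The case analysis splits into the two classes covered by Lemma~\ref{lem:caug}: the exceptional graphs (the $D$-cube and the folded $2D$-cube) and all other bipartite $Q$-polynomial distance-regular graphs of diameter $D \ge 4$.

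First I would dispose of the exceptional cases. If $\G$ is isomorphic to the $D$-cube or to the folded $2D$-cube, then by Lemma~\ref{lem:girth6}(ii) we have $c_2 = 2 \ge 2$, so the conclusion holds directly. Otherwise, Lemma~\ref{lem:caug} applies and supplies scalars $q, s^* \in \RR$ with $|q| > 1$. At this point I would invoke Proposition~\ref{prop:sandq}(i): since the hypothesis $D \ge 6$ is exactly the one appearing there, we conclude $q > 1$. Then Proposition~\ref{prop:q>1} immediately yields $c_2 \ge 2$, completing the main claim.

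For the ``in particular'' assertion, I would argue that since $\G$ is bipartite its girth is at least $4$, and having $c_2 \ge 2$ means there exist two distinct vertices at distance $2$ sharing at least two common neighbors, producing a $4$-cycle; hence the girth equals $4$.

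There is no real obstacle here: the theorem is a bookkeeping step that glues together Lemma~\ref{lem:girth6}(ii), Proposition~\ref{prop:sandq}(i), and Proposition~\ref{prop:q>1}. The genuine work has already been done in Proposition~\ref{prop:q>1} (the arithmetic on $\alpha = 1 + q - q^2 - q^{D-1} + q^D + q^{D+1}$ ruling out $c_2 = 1$ when $q > 1$) and in the cited results \cite{Ca1}, \cite{Ca1a} that underpin Proposition~\ref{prop:sandq}(i). The only thing to verify is that the dependence on $D \ge 6$ used here is the same one required by Proposition~\ref{prop:sandq}(i), which is by design.
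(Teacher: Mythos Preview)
Your proposal is correct and follows essentially the same route as the paper: handle the $D$-cube and folded $2D$-cube directly via $c_2=2$, and otherwise invoke Proposition~\ref{prop:sandq}(i) to get $q>1$ and then Proposition~\ref{prop:q>1} to conclude $c_2\ge 2$. Your explicit justification of the ``in particular'' clause is a small addition the paper leaves implicit, but the argument is the same.
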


\begin{proof}
	If $\G$ is the $D$-cube or the folded $2D$-cube, then $c_2=2$ and the result follows. Assume now that $\G$ is not the $D$-cube or the folded $2D$-cube. Let $s^*,q$ be scalars as in Lemma \ref{lem:caug}. By Proposition \ref{prop:sandq}(i), we have $q > 1$. The result now follows from Proposition \ref{prop:q>1}.
\end{proof}


\section{$\G$ is bipartite: case $D = 5$}
\label{sec:bipDeq5}

In this section, we consider a bipartite $Q$-polynomial distance-regular graphs with diameter $D = 5$ and valency $k \ge 3$. Our main result is that such graphs have $c_2 \ge 2$.

\begin{proposition}
	\label{prop:th2}
	Let $\G$ denote a bipartite $Q$-polynomial distance-regular graph with diameter $D = 5$ and valency $k \ge 3$, which is not the $5$-cube or the folded $10$-cube. Let $s^*,q$ be scalars as in Lemma \ref{lem:caug}, and let $\theta_0, \theta_1, \ldots, \theta_5$ denote a $Q$-polynomial ordering of the eigenvalues of $\G$. If $c_2=1$, then 
	$$
	  2 \theta_2 = \frac{q^4+q^3+q+1 \pm \sqrt{(q^2+1)(q^2+q+1)(q^4+q^3-2q^2+q+1)}}{q^2}.
	$$
\end{proposition}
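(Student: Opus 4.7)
The plan is to combine Caughman's formulas (Lemma \ref{lem:caug}) with the $c_2=1$ condition (Lemma \ref{lem:c2=1}) and derive a quadratic equation satisfied by $\theta_2$. First, specializing Lemma \ref{lem:caug}(ii) to $D=5$ and $i=2$ gives $\theta_2 = h(q^3-q^2)$; substituting the expression for $h$ yields
\begin{equation*}
\theta_2 \;=\; \frac{q^2(1-s^*q^3)}{1-s^*q^7}.
\end{equation*}
This is fractional-linear in $s^*$, so I invert it to get $s^* = (\theta_2-q^2)/(q^2(q^2\theta_2-1))$.

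Next, Lemma \ref{lem:c2=1} (with $D=5$) tells us that $c_2=1$ is equivalent to $s^*$ being a root of $q^{10}(s^*)^2-\alpha q^2 s^*+1=0$, where $\alpha = 1+q-q^2-q^4+q^5+q^6$. I would substitute the expression for $s^*$ from the first step into this quadratic and clear the denominator $q^4(q^2\theta_2-1)^2$. After collecting terms one obtains a polynomial identity in $\theta_2$ and $q$ of the \emph{palindromic} shape
\begin{equation*}
A(q)\,\theta_2^2+B(q)\,\theta_2+A(q)=0.
\end{equation*}
The palindromic form is not accidental: the two roots $s_1,s_2$ of the $s^*$-quadratic satisfy $s_1s_2 = q^{-10}$, and a direct check shows that the map $s^*\mapsto 1/(q^{10}s^*)$ sends the fractional-linear expression for $\theta_2$ above to $1/\theta_2$, so the two candidate eigenvalues are reciprocals and the quadratic they satisfy must be palindromic.

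A bookkeeping expansion then produces $A(q)=-q^2(1-q^2)^2(1+q^2)$ and $B(q)=(1-q^2)^2(1+q^2)(q^4+q^3+q+1)$; the common factor cancels, leaving the clean equation
\begin{equation*}
\theta_2^2-\frac{q^4+q^3+q+1}{q^2}\,\theta_2+1=0.
\end{equation*}
Finally, applying the quadratic formula and using the factorization
\begin{equation*}
(q^4+q^3+q+1)^2-4q^4=(q^4+q^3+2q^2+q+1)(q^4+q^3-2q^2+q+1)=(q^2+1)(q^2+q+1)(q^4+q^3-2q^2+q+1)
\end{equation*}
(where the first equality is a difference of squares with $A=q^4+q^3+q+1$, $B=2q^2$, and the second uses the identity $(q^2+1)(q^2+q+1)=q^4+q^3+2q^2+q+1$) delivers exactly the formula claimed in the proposition. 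The only real obstacle is the polynomial bookkeeping in the elimination of $s^*$; the simplifications to watch for are the common factor $(1-q^2)^2(1+q^2)=1-q^2-q^4+q^6$ appearing in both $A(q)$ and $B(q)$, together with the difference-of-squares factoring of the discriminant.
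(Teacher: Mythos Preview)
Your argument is correct and is essentially the paper's own one-line proof (``eliminate $s^*$ using Lemma~\ref{lem:c2=1}, then rationalize'') carried out explicitly; the palindromy observation via $s^*\mapsto 1/(q^{10}s^*)$ is a nice bonus that explains the shape of the answer. One slip to fix: inverting $\theta_2 = q^2(1-s^*q^3)/(1-s^*q^7)$ gives $s^* = (\theta_2-q^2)/\bigl(q^{5}(q^2\theta_2-1)\bigr)$, not $q^{2}$ in the denominator; with this correction the substitution into $q^{10}(s^*)^2-\alpha q^2 s^*+1=0$ indeed produces $q^3N^2-\alpha NM+q^3M^2=0$ (where $N=\theta_2-q^2$, $M=q^2\theta_2-1$), and your stated $A(q)$, $B(q)$ and the final quadratic follow exactly as written.
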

\begin{proof}
	Eliminate $s^*$ in  Lemma \ref{lem:caug}(ii) using Lemma \ref{lem:c2=1}, and then rationalize the denominator of the obtained fraction. 
\end{proof}

\begin{corollary}
	\label{cor:th2}
		Let $\G$ denote a bipartite $Q$-polynomial distance-regular graph with diameter $D = 5$ and valency $k \ge 3$, which is not the $5$-cube or the folded $10$-cube. Let $s^*,q$ be scalars as in Lemma \ref{lem:caug}, and let $\theta_0, \theta_1, \ldots, \theta_5$ denote a $Q$-polynomial ordering of the eigenvalues of $\G$.  Define $\beta=q+1/q$, $t=\beta^2+\beta-2$, and assume that $c_2=1$. Then 
		\begin{equation}
			\label{eq:th2}
			2 \theta_2 = \beta^2+\beta-2 \pm \sqrt{\beta (\beta+1) (\beta^2+\beta-4)} = t \pm \sqrt{t^2-4}.
		\end{equation}
\end{corollary}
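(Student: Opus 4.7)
The plan is to deduce the corollary directly from Proposition \ref{prop:th2} by the substitution $\beta = q + q^{-1}$, exploiting that every symmetric Laurent polynomial in $q$ of the form $q^n + q^{-n}$ is a polynomial in $\beta$; in particular $q^2 + q^{-2} = \beta^2 - 2$.

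First I would rewrite the rational expression in Proposition \ref{prop:th2} by dividing numerator and radicand by appropriate powers of $q$. For the numerator:
\[
\frac{q^4+q^3+q+1}{q^2} \;=\; \left(q^2+q^{-2}\right) + \left(q+q^{-1}\right) \;=\; (\beta^2-2)+\beta \;=\; \beta^2+\beta-2 \;=\; t.
\]
For the discriminant I would split $q^4 = q \cdot q \cdot q^2$ and distribute the factors across the three polynomial factors:
\[
\frac{q^2+1}{q}=\beta,\qquad \frac{q^2+q+1}{q}=\beta+1,\qquad \frac{q^4+q^3-2q^2+q+1}{q^2}=(\beta^2-2)+\beta-2=\beta^2+\beta-4.
\]
Multiplying these three identities together yields
\[
\frac{(q^2+1)(q^2+q+1)(q^4+q^3-2q^2+q+1)}{q^4} \;=\; \beta(\beta+1)(\beta^2+\beta-4),
\]
which gives the first equality in \eqref{eq:th2}.

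For the second equality, I would simply factor $t^2-4$ as a difference of squares:
\[
t^2-4 \;=\; (t-2)(t+2) \;=\; (\beta^2+\beta-4)(\beta^2+\beta) \;=\; \beta(\beta+1)(\beta^2+\beta-4),
\]
which matches the radicand already computed. There is essentially no obstacle here: the whole argument is a routine symmetric-function manipulation, and the only point worth flagging is to make sure that, when dividing the radicand by $q^4$, the factors of $q$ are distributed as $q \cdot q \cdot q^2$ so that each of the three polynomial factors becomes a polynomial in $\beta$.
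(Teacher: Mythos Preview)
Your argument is correct and is precisely the computation the paper has in mind: the paper's own proof simply states that \eqref{eq:th2} ``follows immediately from Proposition~\ref{prop:th2} and from the definition of parameter $t$,'' and your substitution $\beta=q+q^{-1}$ (using $q^2+q^{-2}=\beta^2-2$) together with the factorization $t^2-4=\beta(\beta+1)(\beta^2+\beta-4)$ is exactly how that immediacy is realized.
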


\begin{proof}
	Equation \eqref{eq:th2} follows immediately from Proposition \ref{prop:th2} and from the definition of parameter $t$. 
\end{proof}

\begin{lemma}
	\label{lem:rational}
	Let $\G$ denote a bipartite $Q$-polynomial distance-regular graph with diameter $D = 5$ and valency $k \ge 3$, which is not the $5$-cube or the folded $10$-cube. Let $s^*,q$ be scalars as in Lemma \ref{lem:caug}, and let $\theta_0, \theta_1, \ldots, \theta_5$ denote a $Q$-polynomial ordering of the eigenvalues of $\G$. Define $\beta=q+1/q$ and $t=\beta^2+\beta-2$, and assume that $c_2=1$. Then the following (i)--(iii) hold:
	\begin{itemize}
		\item[(i)] $t =(\theta_2^2+1)/\theta_2$;
		\item[(ii)] $\sqrt{4t+9} \in \QQ$;
		\item[(iii)] $\theta_2 \ge 1$.
	\end{itemize}
\end{lemma}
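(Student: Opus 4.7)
The plan is to extract each of (i)--(iii) from a single algebraic identity. Starting from Corollary~\ref{cor:th2}, I have
\[
2\theta_2 - t = \pm\sqrt{t^2-4},
\]
so squaring and simplifying yields $\theta_2^2 - t\theta_2 + 1 = 0$. Note that $\theta_2 \ne 0$: otherwise $t = \pm\sqrt{t^2-4}$ would give $0 = -4$. Rearranging gives (i). This identity will drive the rest of the proof.

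For (ii), the key observation is that $\beta = q + 1/q$ satisfies $\beta^2 + \beta - (t+2) = 0$ by definition of $t$, so
\[
\beta = \frac{-1 \pm \sqrt{4t+9}}{2}.
\]
By Proposition~\ref{prop:sandq}(iv), since $D = 5 \ge 5$ all $\theta_i$ are integers, and hence Proposition~\ref{prop:sandq}(iii) (together with $\theta_1 \ne -1$) forces $\beta \in \QQ$. Therefore $\sqrt{4t+9} = \pm(2\beta + 1) \in \QQ$, giving (ii).

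For (iii), the strategy is to show $t > 0$, and then use the identity from (i) to transfer positivity to $\theta_2$. Since $|q| > 1$, the function $q \mapsto q + 1/q$ forces $\beta > 2$ if $q > 1$ and $\beta < -2$ if $q < -1$, so in either case $|\beta| > 2$. A quick case check of $t = \beta^2 + \beta - 2 = (\beta+2)(\beta-1)$ then shows $t > 0$: if $\beta > 2$ both factors are positive, and if $\beta < -2$ both are negative. From (i), $t\theta_2 = \theta_2^2 + 1 > 0$ combined with $t > 0$ gives $\theta_2 > 0$. Since $\theta_2$ is an integer by Proposition~\ref{prop:sandq}(iv), we conclude $\theta_2 \ge 1$.

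No step looks like a real obstacle; the whole proof is a sequence of short algebraic manipulations chained through the single quadratic relation $\theta_2^2 - t\theta_2 + 1 = 0$. The only thing to be careful about is invoking the right pieces of Proposition~\ref{prop:sandq}: rationality of $\beta$ for part (ii), and integrality of $\theta_2$ for part (iii). The verification that $|\beta| > 2$ (and hence $t > 0$) from $|q| > 1$ is the one mildly non-trivial inequality and should be stated explicitly.
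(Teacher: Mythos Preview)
Your proof is correct and follows essentially the same route as the paper: derive the quadratic $\theta_2^2 - t\theta_2 + 1 = 0$ from Corollary~\ref{cor:th2} for (i), solve $\beta^2+\beta-(t+2)=0$ and invoke $\beta\in\QQ$ for (ii), and use $|\beta|>2$ to get $t>0$ and hence $\theta_2>0$ for (iii). Your version is in fact slightly more explicit---you justify $\beta\in\QQ$ via Proposition~\ref{prop:sandq}(iii),(iv) and deduce $\theta_2>0$ from the identity $t\theta_2=\theta_2^2+1$ rather than directly from \eqref{eq:th2}---but these are minor elaborations of the same argument.
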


\begin{proof}
(i) This follows directly from \eqref{eq:th2}. 

\noindent
(ii) Since $\beta^2+\beta-(t+2)=0$, we have that 
$$
  \beta = \frac{-1 \pm \sqrt{4t+9}}{2}.
$$
As $\beta \in \QQ$, the result follows.

\noindent
(iii) Note that $t > 0$ as $|\beta| > 2$, and so \eqref{eq:th2} implies that $\theta_2  > 0$. The result follows since $\theta_2$ is an integer.

\end{proof}

\noindent
The following result is a folklore and therefore its proof is left to the reader.

\begin{lemma}
	\label{lem:folk}
	Let $m, n$ be positive integers, such that $\sqrt{m/n} \in \QQ$. Then there exist integers $a,b$ with $\gcd(a,b)=1$ such that $m=d a^2$ and $n=d b^2$, where $d = \gcd(m,n)$.
\end{lemma}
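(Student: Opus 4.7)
The plan is to reduce to the coprime case and then use unique factorization. First I would set $m' = m/d$ and $n' = n/d$, so that $m = d m'$, $n = d n'$, and $\gcd(m',n')=1$ by definition of the greatest common divisor. Since $\sqrt{m/n}=\sqrt{m'/n'}\in\QQ$, I can write it in lowest terms as $\sqrt{m'/n'}=a/b$ with $a,b$ positive integers and $\gcd(a,b)=1$. Squaring and clearing denominators gives the identity $m' b^2 = n' a^2$.

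Next I would extract the structural information from this identity. Since $n'\mid m' b^2$ and $\gcd(m',n')=1$, we get $n'\mid b^2$. Symmetrically $m'\mid a^2$. Running the divisibilities the other way, $a^2\mid n' a^2 = m' b^2$ combined with $\gcd(a,b)=1$ (and hence $\gcd(a^2,b^2)=1$) forces $a^2\mid m'$, and similarly $b^2\mid n'$. Combining the two sides, $m' = a^2$ and $n' = b^2$, which is precisely the desired conclusion $m = d a^2$, $n = d b^2$.

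The only mildly delicate point is the step $\gcd(a,b)=1\Rightarrow\gcd(a^2,b^2)=1$, which follows immediately from unique factorization (a prime dividing $a^2$ divides $a$, and similarly for $b$). There is no real obstacle here; the argument is entirely elementary and just records the standard observation that a rational square root of a ratio of coprime integers forces each of those integers to be a perfect square.
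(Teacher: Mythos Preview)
Your argument is correct and entirely standard: reduce to coprime $m',n'$ by dividing out $d=\gcd(m,n)$, write the rational square root in lowest terms, and use the two-sided divisibility to conclude $m'=a^2$, $n'=b^2$. The paper itself does not give a proof of this lemma at all --- it simply states that the result is folklore and leaves the proof to the reader --- so there is nothing to compare against; your write-up supplies exactly the kind of elementary verification the paper omits.
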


\begin{proposition}
	\label{prop:notsquare}
	Let $u$ be a non-negative integer. Then $4u^2+9u+4$ is a perfect square if and only if $u=0$.
\end{proposition}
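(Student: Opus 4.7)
The plan is to sandwich the expression $4u^2+9u+4$ strictly between two consecutive perfect squares for all positive integers $u$, which will force it to fail to be a square in that range, while handling $u=0$ as a separate trivial check.

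First I would dispose of the easy direction: if $u=0$, then $4u^2+9u+4=4=2^2$, so the expression is indeed a perfect square. For the converse, I would aim to show that for every integer $u\ge 1$,
\[
  (2u+2)^2 \;<\; 4u^2+9u+4 \;<\; (2u+3)^2.
\]
The motivation is that $\sqrt{4u^2+9u+4}$ behaves like $2u+\tfrac{9}{4}+O(1/u)$ for large $u$, so it should lie between the consecutive integers $2u+2$ and $2u+3$ once $u$ is positive.

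Next I would verify each inequality by routine algebra. The lower bound reduces to $4u^2+8u+4 < 4u^2+9u+4$, equivalently $u>0$, which holds for $u\ge 1$. The upper bound reduces to $4u^2+9u+4 < 4u^2+12u+9$, equivalently $3u+5>0$, which is trivially true. Consequently, $4u^2+9u+4$ is strictly between two consecutive perfect squares and therefore cannot itself be a perfect square when $u\ge 1$.

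I do not anticipate a serious obstacle here; the only mild subtlety is picking the correct pair of consecutive squares to compare against, and the choice $(2u+2)^2$ and $(2u+3)^2$ is dictated by the leading behavior of $\sqrt{4u^2+9u+4}$. Combining the two cases yields the stated equivalence.
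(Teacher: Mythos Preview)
Your proof is correct. The sandwich inequalities $(2u+2)^2 < 4u^2+9u+4 < (2u+3)^2$ for $u\ge 1$ are verified exactly as you say, and the case $u=0$ is immediate.

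Your route differs from the paper's. The paper multiplies the equation $4u^2+9u+4=\alpha^2$ through by $16$ to obtain $(8u+9)^2-(4\alpha)^2=17$, and then invokes the fact that an odd prime $p$ has a unique representation as a difference of two squares, namely $p=\bigl(\tfrac{p+1}{2}\bigr)^2-\bigl(\tfrac{p-1}{2}\bigr)^2$; for $p=17$ this forces $8u+9=9$, hence $u=0$. Your argument is more self-contained: it avoids the auxiliary number-theoretic fact and the clearing of denominators, at the cost of needing to guess the correct pair of consecutive squares. The paper's approach, by contrast, would generalize more readily to related quadratics whose discriminant (after completing the square) is a prime, whereas the sandwich method requires the expression to fall between consecutive squares for all but finitely many inputs, which need not always happen so cleanly.
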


\begin{proof}
	If $u=0$ then it is clear that $4u^2+9u+4$ is a perfect square. Assume now that $4u^2+9u+4 = \alpha^2$ for some integer $\alpha$. Note that $4u^2+9u+4 = \alpha^2$ is equivalent to the equality
	$$
	(8u+9)^2 - (4 \alpha)^2 = 17.
	$$
	It is a well known fact that every odd prime number $p$ can be written in a unique way as a difference of two squares, namely
	$$
	p = \Big( \frac{p+1}{2} \Big)^2 - \Big( \frac{p-1}{2} \Big)^2,
	$$
	see for example \cite[p. 270]{Burton}. Therefore $8u+9=9$, and so $u=0$.  
\end{proof}

\noindent
We are now ready to prove the main result of this section. 

\begin{theorem}
	\label{thm:t}
	Let $\G$ denote a bipartite $Q$-polynomial distance-regular graph with diameter $D = 5$ and valency $k \ge 3$. Then $c_2 \ge 2$. In particular, the girth of $\G$ is $4$.
\end{theorem}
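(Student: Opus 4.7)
The first move is to dispose of $\Gamma$ being the $5$-cube or the folded $10$-cube: in both cases $c_2=2$ by Lemma \ref{lem:girth6}(ii), so we are done. From now on assume $\Gamma$ is neither, and suppose for contradiction that $c_2=1$. By Lemma \ref{lem:rational}, $\theta_2$ is a positive integer, $t=(\theta_2^2+1)/\theta_2$, and $\sqrt{4t+9}\in\QQ$. Substituting the expression for $t$ gives
$$ 4t+9 \;=\; \frac{4\theta_2^2+9\theta_2+4}{\theta_2}, $$
so $(4\theta_2^2+9\theta_2+4)/\theta_2$ must be a rational square.

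The next step is to turn this into a concrete Diophantine problem. Since $\gcd(4\theta_2^2+9\theta_2+4,\theta_2)=\gcd(4,\theta_2)\in\{1,2,4\}$, Lemma \ref{lem:folk} applied with $m=4\theta_2^2+9\theta_2+4$ and $n=\theta_2$ produces $d=\gcd(4,\theta_2)$, together with coprime positive integers $a,b$ satisfying $\theta_2=db^2$ and $4\theta_2^2+9\theta_2+4=da^2$. Eliminating $\theta_2$ in each case yields the single-variable equations $a^2=4b^4+9b^2+4$ (if $d=1$), $a^2=8b^4+9b^2+2$ (if $d=2$), and $a^2=16b^4+9b^2+1$ (if $d=4$).

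The bulk of the argument, and the main obstacle, lies in ruling out all three cases, since Proposition \ref{prop:notsquare} was tailored only to the first one. For $d=1$, putting $u=b^2\ge 1$ makes it exactly the situation of Proposition \ref{prop:notsquare}, forcing $u=0$, which is impossible. For $d=2$, the constraint $\theta_2=2b^2\equiv 2\pmod 4$ (built into the case $d=\gcd(4,\theta_2)=2$) forces $b$ odd; reducing $a^2=8b^4+9b^2+2$ modulo $4$ gives $a^2\equiv 3\pmod 4$, contradicting the fact that squares are $\equiv 0,1\pmod 4$. For $d=4$, the sandwich
$$ (4b^2+1)^2 = 16b^4+8b^2+1 \;<\; a^2 \;<\; 16b^4+16b^2+4 = (4b^2+2)^2, $$
valid for every $b\ge 1$, squeezes $a^2$ strictly between two consecutive squares, again a contradiction.

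With all three cases eliminated, the assumption $c_2=1$ is untenable, so $c_2\ge 2$. Since $\Gamma$ is bipartite with $a_1=0$, any two vertices at distance $2$ now have at least two common neighbors, yielding a $4$-cycle; hence the girth of $\Gamma$ equals $4$.
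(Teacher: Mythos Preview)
Your proof is correct and follows essentially the same approach as the paper: reduce via Lemma~\ref{lem:folk} to the three cases $d\in\{1,2,4\}$, kill $d=2$ by a mod~$4$ argument, and kill $d\in\{1,4\}$ by showing a certain integer cannot be a perfect square. The only minor difference is in the $d=4$ case: the paper simply notes that $4\theta_2^2+9\theta_2+4=4a^2=(2a)^2$ is already a perfect square and invokes Proposition~\ref{prop:notsquare} with $u=\theta_2$ (thus treating $d=1$ and $d=4$ simultaneously), whereas you first substitute $\theta_2=4b^2$ and then use a sandwich between consecutive squares---both work equally well.
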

	
\begin{proof}
	If $\G$ is the $5$-cube or the folded $10$-cube, then $c_2=2$, and the result follows. Assume now that $\G$ is not the $5$-cube or the folded $10$-cube. Let $s^*,q$ be scalars as in Lemma \ref{lem:caug}, and let $\theta_0, \theta_1, \ldots, \theta_5$ denote a $Q$-polynomial ordering of the eigenvalues of $\G$.  Define $\beta=q+1/q$ and $t=\beta^2+\beta-2$, and assume that $c_2=1$. By Lemma \ref{lem:rational}(i) we have $t= (\theta_2^2+1)/\theta_2$, and so $4t+9=(4\theta_2^2+9\theta_2+4)/\theta_2$.  Recall that $\sqrt{4t+9} \in \QQ$ by Lemma \ref{lem:rational}(ii). Therefore, by  Lemma \ref{lem:folk}, there exist integers $a,b$ with $\gcd(a,b)=1$, such that $4\theta_2^2+9\theta_2+4=d a^2$ and $\theta_2=d b^2$, where $d=\gcd(4\theta_2^2+9\theta_2+4,\theta_2)$. It is easy to see that $d \in \{1,2,4\}$. 
	
	Assume first that $d=2$. This implies that $\theta_2$ is even and not divisible by $4$. Recall also that  $\theta_2=2 b^2$, and so $b$ must be odd. We have
	$$
	  4\theta_2^2+9\theta_2+4 = 16 b^4 + 18 b^2 + 4 = 2 a^2,
	$$
	and so
	\begin{equation}
		\label{eq:mod4}
		8 b^4 + 9 b^2 + 2 = a^2.
	\end{equation}
	It follows that $a$ is odd, which implies $a^2 \equiv b^2 \equiv 1 \pmod{4}$. But now \eqref{eq:mod4} yields $11=9+2 \equiv 1 \pmod{4}$, a contradiction.
	
	Assume next that $d=\{1,4\}$, and so Lemma \ref{lem:folk} implies that $4\theta_2^2+9\theta_2+4$ is a perfect square. By Proposition \ref{prop:notsquare} we get $\theta_2=0$, contradicting Lemma \ref{lem:rational}(iii). This concludes the proof.
\end{proof}

\noindent
We are now ready to state and prove the main result of this paper.

\begin{theorem}
	\label{thm:main}
	Let $\G$ denote a bipartite $Q$-polynomial distance-regular graph with diameter $D \ge 3$ and valency $k \ge 3$. Then the girth of $\G$ is equal to $6$ if and only if  $\G$ is  isomorphic either to the Odd graph on a set with cardinality $2D+1$, or to a generalized hexagon of order $(1, k - 1)$.
\end{theorem}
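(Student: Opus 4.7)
The plan is to assemble the case analyses from Sections \ref{sec:almostbip}, \ref{sec:bipDne5}, and \ref{sec:bipDeq5} into the stated biconditional. One small oddity must be addressed first: as written, the theorem carries the hypothesis ``bipartite'' yet lists the Odd graph, which is almost bipartite, among the conclusions. I read the statement in the spirit of the abstract, where the classification is claimed for every $Q$-polynomial distance-regular graph of diameter $D \ge 3$ and valency $k \ge 3$; under that reading both cases must be handled.

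For the backward direction, I will simply appeal to \cite[Subsections 9.1.D, 6.5]{BCN} to confirm that both the Odd graph on a set of cardinality $2D+1$ and the generalized hexagon of order $(1, k-1)$ are $Q$-polynomial distance-regular graphs of girth $6$. For the forward direction, I will begin by invoking Lemma \ref{lem:girth6}(i) to extract $a_1 = a_2 = 0$ and $c_2 = 1$ from the girth hypothesis. Since in particular $a_1 = 0$, Corollary \ref{cor:girth6} then forces $\G$ to be either bipartite or almost bipartite, and the two cases are treated separately.

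If $\G$ is almost bipartite, Theorem \ref{thm:almost} identifies it directly with the Odd graph on a set of cardinality $2D+1$. If $\G$ is bipartite, I will split on the diameter: Proposition \ref{prop:D=3} identifies the $D=3$ case with the generalized hexagon of order $(1, k-1)$; Proposition \ref{prop:D=4} rules out $D=4$; Theorem \ref{thm:t} rules out $D=5$ by forcing $c_2 \ge 2$, contradicting $c_2 = 1$; and Theorem \ref{thm:q>1a} rules out $D \ge 6$ by the same conclusion. The only surviving bipartite possibility is therefore the generalized hexagon with $D=3$, which together with the almost bipartite case yields exactly the two graphs in the statement. No real obstacle arises at this stage: all the substantive work has already been done in the preceding sections, and the only care required is in properly incorporating the Odd graph output from the almost bipartite branch so that the final list of possibilities in the statement is matched.
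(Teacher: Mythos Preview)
Your proposal is correct and follows essentially the same route as the paper's own proof: invoke Corollary~\ref{cor:girth6} to reduce to the bipartite and almost bipartite cases, then cite Theorem~\ref{thm:almost}, Proposition~\ref{prop:D=3}, Proposition~\ref{prop:D=4}, Theorem~\ref{thm:q>1a}, and Theorem~\ref{thm:t} to finish. Your observation about the spurious word ``bipartite'' in the hypothesis is well taken---it is indeed a typo, and the theorem should be read (as you do) as applying to all $Q$-polynomial distance-regular graphs with $D\ge 3$ and $k\ge 3$.
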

 
\begin{proof}
	 If  $\G$ is  isomorphic either to the Odd graph on a set with cardinality $2D+1$ or to a generalized hexagon of order $(1, k - 1)$, then it is easy to see that the girth of $\G$ is $6$. Now assume that the girth of $\G$ is equal to $6$. By Corollary \ref{cor:girth6}, $\G$ is either almost bipartite or bipartite. The result now follows from Theorem \ref{thm:almost}, Proposition \ref{prop:D=3}, Proposition \ref{prop:D=4}, Theorem \ref{thm:q>1a}, and Theorem \ref{thm:t}.
\end{proof}

\end{document}